\DeclareSymbolFont{rmlargesymbols}{U}{euex}{m}{n}
\DeclareMathSymbol{\rmintop}{\mathop}{rmlargesymbols}{82}
\newcommand{\rmint}{\rmintop\nolimits}
\def\cH{\mathcal{H}}
\def\cO{\mathcal{O}}
\def\al{\alpha}
\def\ga{\gamma}
\def\La{\Lambda}
\def\Si{\Sigma}
\def\Om{\Omega}
\newcommand{\der}{{\rm d}}
\def\P{\mathrm{P}}
\newtheorem{theorem}{Theorem}[section]
\newtheorem{proposition}{Proposition}[section]
\theoremstyle{remark}
\newcommand{\nd}{\nabla}
\author{Matthew Randall}
\address{Department of Mathematics and Statistics\\
Faculty of Science, Masaryk University\\
Kotl\'a\v{r}sk\'a 2, 611 37 Brno\\
Czech Republic}
\email{randallm@math.muni.cz}
\title[Three dimensional near-horizon metrics that are EW]{Three dimensional near-horizon metrics that are Einstein-Weyl}
\subjclass[2010]{53B15, 53B30 (primary), and 83C57 (secondary)} 
\thanks{This work is supported by the Grant agency of the Czech Republic P201/12/G028.}
\begin{document}

\begin{abstract}
We investigate which three dimensional near-horizon metrics $g_{NH}$ admit a compatible 1-form $X$ such that $(X, [g_{NH}])$ defines an Einstein-Weyl structure. We find explicit examples and see that some of the solutions give rise to Einstein-Weyl structures of dispersionless KP type and dispersionless Hirota (aka hyperCR) type.  
\end{abstract}
\maketitle

\section{Introduction}

Let $M^3$ be a three dimensional smooth manifold equipped with a Lorentzian metric. A near-horizon metric on $M^3$ is a Lorentzian metric of the form
\begin{equation}\label{nhg}
g_{NH}=2 \der \nu \left( \der r+r h(x) \der x+\frac{r^2}{2}F(x) \der \nu\right)+\der x \der x,
\end{equation}
where $x$, $\nu$ and $r$ are local coordinates and $h(x)$, $F(x)$ are arbitrary functions of $x$. Near-horizon geometries in higher dimensions are studied in relation to the existence of extremal black holes \cite{dgs}, \cite{KL3}, \cite{lp}, \cite{lrs}, \cite{lsw}.

The near-horizon metric (\ref{nhg}) is derived as follows. For a smooth null hypersurface $\Si^2$ in $M^3$, there exists an adapted coordinate system called Gaussian null coordinates valid in any neighbourhood of $\Si^2$. Imposing that the normal vector field $N^a$ be Killing in $M^3$ implies that $\Si^2$ is a Killing horizon. We further require that the Killing horizon $\Si^2$ be degenerate. This means that the flow of $N^a$ is along affinely parameterised null geodesics on $\Si^2$, or equivalently that the surface gravity of $\Si^2$ is zero. The scaling limit of the degenerate Killing horizon is called a near-horizon geometry, and the metric is of the form (\ref{nhg}).

In dimensions 4 or more, imposing the vacuum Einstein equations on the near-horizon geometry metric $g_{NH}$ give rise to the near-horizon geometry equations on the spatial section of the degenerate Killing horizon. Solutions to this overdetermined system of equations on a compact cross-section are related to the existence of extremal black holes. For further details, see \cite{KL0}, \cite{KL3} and \cite{lp}.

Similarly, we can require that a 3 dimensional near-horizon geometry metric satisfies Einstein's equations. This has been done in \cite{KL3}. In this case, the near-horizon geometry equations reduce to a pair of first order ODEs that can be integrated explicitly. It is also found that global and periodic solutions for $h(x)$ exist on a 1-dimensional cross-section $\cH$ which has the topology of the circle; this is the 1-dimensional analogue of an extremal black hole horizon. 

Instead of asking Einstein's equations to be satisfied, we can ask for a natural generalisation which is to impose the Einstein-Weyl equations instead. This requires an additional structure of a 1-form, to be explained in the next section. Here we investigate whether there exists a 1-form $X$ compatible with (\ref{nhg}) such that $(X,g_{NH})$ satisfies the Einstein-Weyl equations.

In the 37th winter school in geometry and physics, held in Srn\'i, Czech Republic in 2017, plenary lectures on non-expanding horizons including near horizon geometries were presented by Jerzy Lewandowski and lectures on Einstein-Weyl geometry and dispersionless integrable systems were presented by Maciej Dunajski. The latter topics were also covered in the lectures by Evgeny Ferapontov. We hope that attendees of the school would find these topics to be interesting and delve into the rich and steadily expanding field that relates integrable PDEs to general relativity and classical differential geometry.  

\section{Three dimensional Einstein-Weyl geometries}
Einstein-Weyl geometries in 3 dimensions play an important role because of its relationship with the geometry of third order ODEs (see \cite{Nurowski} and \cite{Tod}), twistor theory (see \cite{Hitchin} and \cite{holodisk}) and integrable systems (see \cite{grass}, \cite{ck}, \cite{DFK} and \cite{FerKrug}).
Let $(M^3,[g])$ be a smooth conformal manifold equipped with a conformal class of metrics $[g]$ of (pseudo)-Riemannian signature. In 3 dimensions, this is either Riemannian or Lorentzian. Any 2 representative metric $g$, $\tilde g \in [g]$ are related via
\[
\tilde g= \Om^2 g
\]
for some smooth positive function $\Om$.
A Weyl structure on $(M^3,[g])$ is a torsion-free connection $D$ that preserves the conformal class of metrics. Equivalently, 
\[
D_ag_{bc}=2 X_a g_{bc}
\]
for $g_{ab}$ a representative in the conformal class and $X_a$ a 1-form that is not necessarily closed. 
The Einstein-Weyl equations are the system of equations obtained by requiring that that the symmetric part of the Ricci tensor of the Weyl connection is pure trace, i.e.\
\[
R^{D}_{(ab)}=s g_{ab}
\]
for some function $s$. This system of equations is conformally invariant. Writing $R^D_{(ab)}$ in terms of the Levi-Civita connection $\nd$ for the metric $g_{ab}$ and 1-form $X_a$, we get
\[
\nd_{(a}X_{b)}+X_aX_b+\P_{ab}=\La g_{ab},
\]
where round brackets denote symmetrisation over indices, $\P_{ab}$ is the Schouten tensor of $g_{ab}$ and the trace term 
\[
\La=\frac{1}{3}\left(\nd_aX^b+X_aX^b+\P\right)
\]
is a function (more appropriately, a section of a density valued line bundle and we refer to \cite{conformal} and \cite{ET} for more details).

In \cite{Cartan}, it is shown that a 3-dimensional Lorentzian Einstein-Weyl structure corresponds to a 2 parameter family of totally-geodesic null hypersurfaces in $M^3$. A twistor description is developed in \cite{Hitchin}, where it is shown that real-analytic Lorentzian 3-dimensional Einstein-Weyl structures arise precisely from the Kodaira deformation space of rational normal curves with normal bundle $\cO(2)$ in a 2-dimensional complex manifold.
The 2-form $\der X$ is called the Faraday 2-form of the Weyl structure. If $\der X=0$, then $X$ is locally exact and $g$ can be conformally rescaled to a metric of constant curvature in 3 dimensions. In particular, it implies that $g$ is locally conformally flat. A computation of the Cotton tensor of the near-horizon metric $g_{NH}$ shows that
\begin{proposition}
The metric of the form (\ref{nhg}) is locally conformally flat iff
\begin{equation}\label{cotton}
F'(x)=F(x) h(x). 
\end{equation}
\end{proposition}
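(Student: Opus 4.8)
The plan is to reduce the statement to a Cotton-tensor computation. Recall that in three dimensions the Weyl tensor vanishes identically, and a (pseudo-)Riemannian metric is locally conformally flat precisely when its Cotton tensor
\[
C_{abc}=\nabla_a\P_{bc}-\nabla_b\P_{ac}
\]
vanishes, where $\P_{ab}$ is the Schouten tensor; equivalently, the Cotton--York tensor must be zero. So everything comes down to computing $C_{abc}$ for $g_{NH}$ and reading off when it vanishes.

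To keep the curvature computation under control I would work in the null coframe
\[
e^1=\der x,\qquad e^2=\der\nu,\qquad e^3=\der r+r\,h(x)\,\der x+\tfrac{r^2}{2}F(x)\,\der\nu,
\]
in which $g_{NH}=e^1e^1+2\,e^2e^3$ has constant components. A short calculation gives $\der e^1=\der e^2=0$ and
\[
\der e^3=h\,e^3\wedge e^1+rF\,e^3\wedge e^2+\tfrac{r^2}{2}\bigl(F'-Fh\bigr)\,e^1\wedge e^2,
\]
so the combination $F'-Fh$ already surfaces as the coefficient of $e^1\wedge e^2$. Solving Cartan's first structure equations (the constancy of the frame components forces $\omega_{ij}=-\omega_{ji}$, which pins down the connection $1$-forms), then the second structure equations, produces the curvature $2$-forms, hence the Ricci tensor, the scalar curvature and $\P_{ab}$. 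I expect the components of $\P_{ab}$, and then of $C_{abc}$, to come out as polynomials in $r$, $F$, $h$ and their $x$-derivatives, with the powers of $r$ dictated by conformal weight.

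The last step is to inspect $C_{abc}$. The assertion I would aim to verify is that every nonzero component is a combination of $F'-Fh$ and its derivative $(F'-Fh)'$. For the ``only if'' direction it then suffices to exhibit one component -- the one obtained by pairing $e^1\wedge e^2$ against $e^1$ -- which is a nonvanishing multiple of $F'-Fh$, forcing $(\ref{cotton})$. For the converse one substitutes $F'=Fh$ and checks that all remaining components vanish; the computation above already makes this plausible, since imposing $F'=Fh$ reduces $\der e^3$ to $-(h\,e^1+rF\,e^2)\wedge e^3$, a genuine degeneration of the coframe.

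I expect the only real difficulty to be bookkeeping: carrying the $r$-dependent form $e^3$ through the two structure equations and through the two covariant derivatives in $C_{abc}$ without slips. Using the constant-coefficient null frame rather than the coordinate frame directly is what makes this manageable. As a shortcut for the ``if'' half one could instead, when $F'=Fh$, change coordinates so that $e^3$ becomes exact and then display an explicit positive function $\Om$ with $\Om^2 g_{NH}$ flat; but the Cotton-tensor route is the uniform argument that simultaneously delivers the ``only if'' direction.
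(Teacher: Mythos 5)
Your proposal follows essentially the same route as the paper: the paper's proof is simply the assertion that the Cotton tensor of $g_{NH}$ vanishes if and only if $F'=Fh$, which is exactly the computation you set up (and your preliminary step is right: $\der e^3=h\,e^3\wedge e^1+rF\,e^3\wedge e^2+\tfrac{r^2}{2}(F'-Fh)\,e^1\wedge e^2$ checks out). Your null-coframe organisation of the Cotton-tensor calculation is a reasonable way to carry out what the paper leaves implicit.
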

\begin{proof}
Condition (\ref{cotton}) implies that the Cotton tensor of $g_{NH}$ is zero, and conversely so.
\end{proof}

\section{Results}
We shall assume that $(M^3,[g_{NH}])$ is smooth.
We consider an ansatz for $X$ of the form
\[
X=c h(x)\der x+X_2 \der r +X_3 \der v,
\]
where $c$ is a constant and the functions $X_2=X_2(x,\nu,r)$, $X_3=X_3(x,\nu,r)$ are to be determined. We also require that $X|_{r=0}=c h(x) \der x$, so that the 1-form $X$ is determined up to a constant multiple $c$ by its value on the spatial section of the degenerate Killing horizon. In the following analysis, it turns out that there is a degenerate case when $c=-\frac{1}{2}$. 
We have the following:
\begin{theorem}\label{nhg-ew-0}
A 3 dimensional near-horizon geometry metric (\ref{nhg}) on $M^3$ of the form 
\begin{align}\label{weierstrass}
g_{NH}=&2 \der \nu \left( \der r+r h(x) \der x+\frac{r^2}{2}e^{{\rmint h(x) dx}}\wp\left(\rmint e^{{\frac{1}{2}\rmint h(x) dx}}dx+a;0,b\right)\der \nu\right)\nonumber\\
&+\der x \der x
\end{align}
where $\wp(z;g_2,g_3)$ is the Weierstrass elliptic function, $a$, $b$ are constants,
and a Weyl connection $X$ of the form
\[
X=-\frac{1}{2}h(x) \der x-2r e^{{\rmint h(x) dx}}\wp\left(\rmint e^{{\frac{1}{2}\rmint h(x) dx}}dx+a;0,b\right) \der \nu
\]
defines an Einstein-Weyl structure $([g_{NH}],X)$ on $M^3$. This depends on $1$ free function of one variable.
\end{theorem}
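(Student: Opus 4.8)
The plan is to impose the Einstein-Weyl equations in their Levi-Civita form
\[
\nd_{(a}X_{b)}+X_aX_b+\P_{ab}=\La g_{ab}
\]
on the metric (\ref{nhg}) together with the ansatz $X=ch\,\der x+X_2\,\der r+X_3\,\der\nu$, to read off from the trace-free part the admissible shape of $X$ and the differential constraint it imposes on $F$, and finally to exhibit the Weierstrass function as the solution of that constraint. As preliminaries I would record, in the Gaussian null coordinates $(x,\nu,r)$, the inverse of (\ref{nhg}), the Christoffel symbols of $\nd$, and the Ricci tensor and scalar curvature, hence the Schouten tensor $\P_{ab}=R_{ab}-\tfrac14 R\,g_{ab}$. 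Here $\det g_{NH}=-1$, so no inverse powers of the coordinates occur; every object is a polynomial in $r$ with coefficients built from $h$, $F$ and their $x$-derivatives, and nothing depends on $\nu$.

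Next I would substitute the ansatz into the six components of the Einstein-Weyl system. The trace part merely defines $\La$, so the content sits in the five trace-free equations. Expanding these in powers of $r$ and using $\nu$-independence, I expect the system to force $X_2\equiv 0$ and $X_3$ to be linear in $r$ with $\nu$-independent coefficient; the normalisation $X|_{r=0}=ch\,\der x$ then kills the $r^0$ integration functions. What should survive is an algebraic identification of the coefficient of $X_3$ as a multiple of $F$, together with a single ordinary differential equation relating $F$, $h$ and $c$. It is at the point of combining the trace-free equations that the value $c=-\tfrac12$ separates off: a coefficient that vanishes there drops out, so this branch is less rigid, and in it one finds $X_3=-2rF$ and the equation
\[
F''=6F^2+\tfrac52 h F'+\bigl(h'-\tfrac32 h^2\bigr)F .
\]

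For the last step I would change variables, setting $z=\int e^{\frac12\int h\,\der x}\,\der x+a$ and $G=e^{-\int h\,\der x}F$; using $\der z/\der x=e^{\frac12\int h\,\der x}$ a short computation turns the displayed equation into $\der^2 G/\der z^2=6G^2$, equivalently $\bigl(\der G/\der z\bigr)^2=4G^3-b$ for an integration constant $b$, which is exactly the Weierstrass equation with invariants $g_2=0$, $g_3=b$. Hence $G=\wp(z;0,b)$ up to a shift of $z$, which gives $F=e^{\int h\,\der x}\wp\!\left(\int e^{\frac12\int h\,\der x}\,\der x+a;0,b\right)$ as in (\ref{weierstrass}); I would note in passing that the factor $F'-hF=e^{\frac32\int h\,\der x}\wp'$ one divides by when passing between the first- and second-order forms is precisely the quantity whose vanishing is the local conformal flatness condition (\ref{cotton}). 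Since $h$ was unconstrained throughout and only the constants $a,b$ were introduced, the solution set is parametrised by one free function of one variable. The main obstacle I anticipate is purely the bookkeeping in the middle step: disentangling the five trace-free equations to see that $X_2$ must vanish and that $X_3$ is exactly linear in $r$, isolating the single ODE on $F$ from among them, and tracking the degeneration at $c=-\tfrac12$ carefully enough that the equation above is indeed the one that survives there.
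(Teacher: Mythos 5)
Your proposal follows essentially the same route as the paper: the same component-by-component analysis of the Einstein--Weyl system forcing $X_2=0$ and $X_3$ linear in $r$, the same degeneration at $c=-\tfrac12$, and the same surviving second-order ODE for $F$ (your $F''=6F^2+\tfrac52 hF'+(h'-\tfrac32h^2)F$ is exactly the paper's $-3Fh^2+5hF'+2Fh'+12F^2-2F''=0$). Your explicit substitution $z=\rmint e^{\frac12\rmint h\,\der x}\der x+a$, $G=e^{-\rmint h\,\der x}F$ reducing this to $G_{zz}=6G^2$ is correct and actually supplies a verification step the paper states without proof, so the plan is sound.
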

In particular, taking $h(x)$ to be globally defined and periodic allows $\cH$ to have the topology of a circle.
In the case that $h(x)=0$, the metric simplifies to 
\begin{equation*}
g=2 \der \nu \left( \der r+\frac{r^2}{2}\wp(x+a;0,b)\der \nu\right)+\der x \der x
\end{equation*}
and the 1-form $X$ is given by
\[
X=-2r \wp(x+a;0,b) \der \nu.
\]
We recognise that the function
\[
u(x,\nu,r)=-\frac{r^2}{2}\wp(x+a;0,b)
\]
satisfies the dispersionless Kadomtsev-Petviashvili (dKP) equation given by
\[
2(u_{\nu}-u u_r)_r=u_{xx}
\]
and the Einstein-Weyl structure corresponds to one of dKP type. For further details about such Einstein-Weyl structures we refer to \cite{dKP}.
For generic values of $c$, which also allows for $c=-\frac{1}{2}$, we have
\begin{theorem}\label{nhgewgen}
A 3 dimensional near-horizon geometry metric (\ref{nhg}) on $M^3$ and a Weyl connection $X$ of the form
\[
X=c h(x) \der x+r ((2c+1) h'+c(2c+1) h^2-2 F(x)) \der v
\]
defines an Einstein-Weyl structure if and only if
\[
F(x)=\frac{ h''+4 c h h'+2 c^2 h^3}{2 h}
\]
and $h(x)$ satisfies the $4^{\rm th}$ order ODE
\begin{align}\label{4thode}
&h^3 (h')^2(c-1)^2-\frac{1}{2}(c-1)^2h^4 h''+\frac{9}{4}(c-1)h^2 h' h''\nonumber\\
&-\frac{3}{4}(c-1) h^3 h'''-\frac{1}{2}(h')^2h''+\frac{1}{2}h h' h'''+h(h'')^2-\frac{1}{4}h^2 h''''=0. 
\end{align}
\end{theorem}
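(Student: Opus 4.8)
The plan is to substitute the proposed ansatz
\[
X=c\,h(x)\,\der x+X_2\,\der r+X_3\,\der v
\]
directly into the Einstein-Weyl equations written in Levi-Civita form, namely
\[
\nd_{(a}X_{b)}+X_aX_b+\P_{ab}=\La g_{ab},
\]
and read off the consequences component by component. First I would compute the Levi-Civita connection, Ricci tensor and Schouten tensor $\P_{ab}$ of the near-horizon metric (\ref{nhg}); because the metric is block-triangular with only $x$-dependent coefficients $h(x)$ and $F(x)$, this is a finite calculation whose output is polynomial in $r$ with coefficients built from $h,h',h'',F,F'$. Next I would impose $X|_{r=0}=c\,h(x)\,\der x$, which forces $X_2$ and $X_3$ to vanish at $r=0$; the structure of the equations (each side being low-degree polynomial in $r$) then pins down $X_2=0$ identically and $X_3$ to be linear in $r$, so one writes $X_3=r\,\phi(x)$ and the claim is that $\phi=(2c+1)h'+c(2c+1)h^2-2F$. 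This determination of $\phi$ comes from matching the $\der r\,\der v$ and $\der v\,\der v$ components after eliminating the trace function $\La$.

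The core of the argument is then bookkeeping on the remaining independent components. After substituting $X_3=r\phi(x)$, the Einstein-Weyl system collapses to a small number of scalar equations in $x$: one of them is algebraic and solvable for $F$ in terms of $h$, giving $F=(h''+4chh'+2c^2h^3)/(2h)$ (note this presupposes $h\neq 0$, which is the genuinely non-degenerate regime and should be flagged), and substituting this expression for $F$, together with its derivative $F'$, back into the one surviving relation yields precisely the fourth-order ODE (\ref{4thode}) for $h$. The "if and only if" is immediate from this derivation: every step is an equivalence (given $h\neq0$), so the Einstein-Weyl condition holds exactly when $F$ has the stated form and $h$ solves (\ref{4thode}).

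The main obstacle I anticipate is purely computational stamina rather than conceptual difficulty: one must be careful to keep the symmetrised covariant derivative $\nd_{(a}X_b)$ of a non-closed 1-form correct, to track the $r$-powers so that the elimination of $\La$ is done consistently across components, and to correctly compute $F'$ from the formula for $F$ (which introduces $h'''$ and, upon one more differentiation hidden in the surviving equation, $h''''$, explaining why the final ODE is fourth order). A secondary subtlety is the degenerate locus $h=0$: the division by $h$ in the formula for $F$ means the statement as phrased is about the open set where $h\neq0$, and one should remark that the $h\equiv 0$ case is exactly the situation already handled separately (leading to the Weierstrass/dKP solution of Theorem \ref{nhg-ew-0} with $c=-\tfrac12$). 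I would organise the write-up by first recording $\P_{ab}$ and the components of $\nd_{(a}X_{b)}+X_aX_b$ in a display, then listing the resulting scalar equations, then performing the two substitutions ($X_3=r\phi$, then the formula for $F$) to arrive at (\ref{4thode}).
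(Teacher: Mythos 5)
Your proposal follows essentially the same route as the paper: direct substitution of the ansatz into the Einstein--Weyl equations in Levi-Civita form, then component-by-component elimination giving $X_2=0$ and $X_3$ linear in $r$ (the paper reads these off the $\der r\,\der r$ and $\der x\,\der x$ components respectively), followed by the algebraic determination of $F$ from the $\der x\,\der \nu$ component and the fourth-order ODE from the $\der\nu\,\der\nu$ component. One small correction: the degenerate case split off into Theorem \ref{nhg-ew-0} is $c=-\tfrac{1}{2}$, where the $F$-determining component $\tfrac{r}{2}(2c+1)\left(h''-2hF+4chh'+2c^2h^3\right)$ vanishes identically for any $h$ --- it is not the locus $h\equiv 0$ as you suggest, though your point that solving for $F$ presupposes $h\neq 0$ is a legitimate caveat the paper leaves implicit.
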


\section{Proof}

\begin{proof}[Proof of Theorems \ref{nhg-ew-0} and \ref{nhgewgen}]
We start with an ansatz of the form
\[
X=c h(x)\der x+X_2 \der r +X_3 \der \nu,
\]
where the functions $X_2=X_2(x,\nu,r)$, $X_3=X_3(x,\nu,r)$ are to be determined, and we require that $X|_{r=0}=c h(x) dx$, so that the 1-form $X$ is determined by its value on the spatial section of the degenerate Killing horizon.
We find that substituting this ansatz for $X$ into the Einstein-Weyl equations, the $\der r \der r$ component gives
\[
\partial_r X_2+X_2^2=0, 
\]
which has solutions
\[
X_2=\frac{1}{r+f_1(x,\nu)} \qquad \mbox{or} \qquad X_2=0.
\]
Since the first solution does not restrict to zero on $\{r=0\}$, we take $X_2=0$ instead.
The $\der x \der x$ component gives now 
\[
-2F-\partial_r X_3+c(2c+1) h^2+(2 c+1)h'=0,
\]   
from which we obtain
\[
X_3=(-2F+c(2c+1)h^2+(2c+1)h')r+f_2(x,\nu).
\]
Once again requiring that $X_3|_{r=0}=0$ implies $f_2(x,\nu)=0$. With this, only the $\der x \der \nu$ and $\der \nu \der \nu$ components remain to be solved in the Einstein-Weyl equations.  
The $\der x \der \nu$ component gives
\[
\frac{r}{2}(2c+1)\left(h''-2 h F+4c h h'+2c^2 h^3\right)=0. 
\]
This vanishes identically when $c=-\frac{1}{2}$. Otherwise, we have 
\[
F(x)=\frac{h''+4 c h h'+2 c^2 h^3}{2 h}.
\]
In the first case where $c=-\frac{1}{2}$, the remaining equation in the $\der \nu \der \nu$ component is
\[
-3 F h^2+5 h F'+2 F h'+12 F^2-2 F''=0, 
\]
which has solutions
\[
F(x)=\wp\left(\rmint \exp\left({\frac{1}{2}\rmint h(x) dx}\right) dx+a,0,b\right)\exp\left({\rmint h(x) dx}\right)
\]
whatever $h(x)$ is.
This proves Theorem \ref{nhg-ew-0}.
For the other case, substituting 
\[
F(x)=\frac{h''+4 c h h'+2 c^2 h^3}{2 h}
\] 
gives 
\[
X_3=\frac{c h^3+(1-2 c) h h'-h''}{h}r
\]
and the only remaining $\der \nu \der \nu$ component of the Einstein-Weyl equations gives the $4^{\rm th}$ order ODE
\begin{align*}
&h^3 (h')^2(c-1)^2-\frac{1}{2}(c-1)^2h^4 h''+\frac{9}{4}(c-1)h^2 h' h''\\
&-\frac{3}{4}(c-1) h^3 h'''-\frac{1}{2}(h')^2h''+\frac{1}{2}h h' h'''+h(h'')^2-\frac{1}{4}h^2 h''''=0. 
\end{align*}
Note that this formula is still well-defined for $c=-\frac{1}{2}$.
This proves Theorem \ref{nhgewgen}.
\end{proof}

\section{Explicit solutions}
In this section, we find interesting families of solutions to the 4th order ODE (\ref{4thode}). Consider the following second order nonlinear ODE
\begin{align}\label{2ndh}
h''=\alpha h h'+\beta h^3
\end{align}
with $\al$, $\beta$ constant. We find, upon substituting (\ref{2ndh}) and its derivatives into (\ref{4thode}), that we obtain
\begin{align*}
-\frac{h^3}{4}\big(2(c-1)^2+3\al(c-1)+\al^2-\beta\big)\left(\beta h^4+\al h^2h'-2(h')^2\right)=0.
\end{align*}
Thus solutions to (\ref{2ndh}) automatically satisfy (\ref{4thode}) provided
\begin{equation}\label{abc}
\beta=2(c-1)^2+3\al(c-1)+\al^2.
\end{equation}
Switching independent and dependent variables $(x, h(x)) \mapsto (h, x(h))$, the non-linear second order ODE (\ref{2ndh}) is dual to
\begin{align*}
x''=-\beta h^3 (x')^3-\al h(x')^2,
\end{align*}
which upon setting $y(h)=h^2 x'(h)$, gives an Abel differential equation of the first kind:
\begin{align*}
y'=\frac{1}{h}\left(-\beta y^3-\al y^2+2 y\right).
\end{align*}
This has solutions given by
\begin{align}\label{hy}
h=\frac{\ga\sqrt{y}\exp\left(\frac{\al}{2\sqrt{\al^2+8\beta}}\tanh^{-1}\left(\frac{2\beta y+\al}{\sqrt{\al^2+8\beta}}\right)\right)}{(\beta y^2+\al y-2)^{\frac{1}{4}}},
\end{align} 
and consequently $x$ viewed as a function of $y$ is given by
\begin{align}\label{xy}
x(y)=-\frac{1}{\ga}\rmint \frac{\exp\left(\frac{-\al}{2\sqrt{\al^2+8\beta}}\tanh^{-1}\left(\frac{2\beta y+\al}{\sqrt{\al^2+8\beta}}\right)\right)}{\sqrt{y}(\beta y^2+\al y-2)^{\frac{3}{4}}}\der y.
\end{align}

When $\al=0$, we obtain $\beta=2(c-1)^2$ from (\ref{abc}).
In this case, solutions to the nonlinear ODE (\ref{2ndh})
\[
h''=\beta h^3=2(c-1)^2 h^3
\]
are given by the Jacobi elliptic function
\[
h(x)=a~{\rm sn}\left( a\left(\frac{\sqrt{-2 \beta}}{2}x +b\right), i\right)=a~{\rm sn}\left( a(i(c-1)x +b), i\right),
\]
with $a$, $b$ the constants of integration. Here the elliptic modulus is given by $i=\sqrt{-1}$.
Alternatively, we obtain
\begin{align*}
h=\frac{\ga \sqrt{y}}{(\beta y^2-2)^{\frac{1}{4}}}
\end{align*}
from (\ref{hy}) and
\begin{align*}
x=-\frac{1}{\ga}\rmint\frac{1}{\sqrt{y}(\beta y^2-2)^{\frac{3}{4}}}\der y
\end{align*}
from (\ref{xy}). Passing to $y=\left(\frac{2}{\beta z}\right)^{\frac{1}{2}}$, this gives the expression in terms of hypergeometric functions
\begin{align*}
h=\frac{\ga}{\beta^{\frac{1}{4}}(1-z)^{\frac{1}{4}}} \quad \text{and} \quad
x=\frac{\sqrt{2z}}{2\ga \beta^{\frac{1}{4}}}{}_2F_1\left(\frac{1}{2},\frac{3}{4};\frac{3}{2};z\right).
\end{align*}

When $\beta=0$, we have
\[
(2c-2+\al)(c-1+\al)=0 
\] 
from (\ref{abc}) and therefore $\al=1-c$ or $\al=2-2c$.
For $\al \neq 0$, which implies $c \neq 1$, solutions to (\ref{2ndh}) are given by
\[
h=\frac{1}{\al}\tan\left(\frac{1}{2}\sqrt{2 \ell \al}(x+b)\right)\sqrt{2\ell \al}.
\]
This is periodic but not globally defined. For $\al=0$, or equivalently $c=1$, solutions to (\ref{2ndh}) are given by
\[
h=\ell x+b
\]
with $b$, $\ell$ the constants of integration. The degenerate Killing horizon in this case has the topology of the real line and the metric is not conformally flat. 
If we do not assume either $\al$ or $\beta$ is zero, and consider the case when $c=1$, then equation (\ref{2ndh}) is
\begin{align}\label{habel}
h''=\al h h'+\al^2 h^3.
\end{align}
From (\ref{hy}), we obtain 
\begin{align*}
h=\frac{\ga\sqrt{y}}{(-1)^{\frac{1}{4}}(\al y+2)^{\frac{1}{6}}(1-\al y)^{\frac{1}{3}}}.
\end{align*}
We consider real solutions by taking $\ga=(-1)^{\frac{1}{4}}\varepsilon$.
This gives
\begin{align*}
h^6=\frac{\varepsilon y^3}{(\al y +2) (1-\al y)^2},
\end{align*}
which implies that $y$ is a solution of the cubic equation 
\[
\varepsilon y^3=h^6(\al y+2)(1-\al y)^2.
\]
In the limit $\varepsilon \to 0$, we have $y=-\frac{2}{\al}$ or $y=\frac{1}{\al}$. In these cases we obtain
\[
x(h)=-\rmint \frac{2}{\al h^2}\der h+b=\frac{2}{\al h}+b \mbox{~and~} x(h)=\rmint \frac{1}{\al h^2}\der h+b=-\frac{1}{\al h}+b.
\] 
Hence 
\[
h(x)=\frac{2}{\al(x-b)} \mbox{~and~} h(x)=-\frac{1}{\al(x-b)}
\]
satisfy (\ref{4thode}) with the parameter $c=1$. For these solutions $h(x)$ is singular on the degenerate Killing horizon. These solutions give conformally flat metrics when $\al$ is chosen so that $h(x)=\frac{-2}{x-b}$ or $\frac{1}{x-b}$.
In fact when $c=1$, (\ref{4thode}) can be integrated to give
\begin{align}\label{3rdode1}
-\frac{1}{4}h^2 h'''+h h' h''-\frac{1}{2}(h')^3=0.
\end{align}
Passing to $h(x)={\rm e}^{f(x)}$, we see that (\ref{3rdode1}) is satisfied iff $f(x)$ satisfies the non-linear $3^{\rm rd}$ order ODE
\begin{equation}\label{nlode}
f'''-f' f''-(f')^3=0. 
\end{equation}
This ODE is none other than (\ref{habel}) with $f'$ replacing $h$ and $\al=1$.
This gives the following additional solutions to (\ref{4thode})
\[
h(x)=(x-b)^2 \mbox{~and~} h(x)=\frac{1}{x-b}.
\]
The conformal structure for the first $h(x)$ is not flat and $h(x)$ is globally defined on the line horizon, while the conformal structure for the second is flat, which can be rescaled to a metric of constant scalar curvature.  

Interestingly, we can relate the solutions to (\ref{2ndh}) with $c=-1$, $\al=2$, $\beta=0$ to dispersionless Hirota type or hyperCR Einstein-Weyl structures.
For a review of such structures see \cite{hyperCR}, \cite{hyperCR2}. 
Recall that a metric of the form
\begin{align*}
g=&(\der x+H_r d\nu)^2-4\left(\der r-H_x \der v\right) \der \nu\\
=&2\der \nu\left(-2\der r+H_{r}\der x+(2H_x+\frac{H_r^2}{2})\der \nu\right)+\der x \der x 
\end{align*}
where $H=H(x,\nu,r)$
and a Weyl connection of the form
\[
X=\frac{1}{2}H_{rr}\der x+\frac{1}{2}(H_{r}H_{rr}+2H_{xr}) \der \nu
\]
defines a hyperCR Einstein-Weyl structure iff the hyperCR equation 
\begin{align}\label{hypercr}
H_{x}H_{rr}-H_rH_{xr}-H_{xx}+H_{r\nu}=0
\end{align}
is satisfied. 
A particular family of solutions to this equation can be found:
\[
H(x,\nu,r)=j\tanh^3\left(\frac{a^2}{b}r+b \nu+ax+e\right)+k \tanh\left(\frac{a^2}{b}r+b \nu+ax+e\right)+l
\]
is a $6$ parameter family of solutions satisfying (\ref{hypercr}) depending on constants $a$, $b$, $e$, $j$, $k$, $l$.
Aligning the 1-form in the hyperCR case with our ansatz for $X$, we require that
\[
X|_{r=0}=\frac{1}{2}H_{rr}\der x+\frac{1}{2}(H_{r}H_{rr}+2H_{xr}) \der \nu|_{r=0}=c h(x)\der x.
\]
Hence
\[
H=c h(x) r^2+f_2(x,\nu)
\]
for some function $h(x)$ and $f_2(x,\nu)$. Additionally, to align the hyperCR metric with the near-horizon metric (\ref{nhg}), we require $f_2(x,\nu)=0$. 
Plugging this solution for $H$ back into the metric and 1-form $X$, we discover that the Einstein-Weyl equations are satisfied iff
\begin{align*}
h''=&-2 c h h'.
\end{align*}
This has solutions given by
\[
h(x)=\frac{\sqrt{c \ell}}{c}\tanh\left(\sqrt{c \ell}(x+b)\right)
\]
for $c \neq 0$. 
Redefining coordinates $r \mapsto \tilde r=-2r$ and renaming $r$, 
we have
\begin{proposition}\label{hypercr1}
A 3 dimensional Lorentzian metric on $M^3$ of the form
\begin{equation}\label{tan}
g=2 \der \nu \left( \der r-c h r \der x+\frac{r^2}{2}(c h'+c^2 h^2)\der \nu\right)+\der x \der x
\end{equation}
where $c \neq 0$ with
\[
h(x)=\frac{\sqrt{c \ell}}{c}\tanh\left(\sqrt{c \ell}(x+b)\right)
\]
satisfying the ODE
\[
h''=-2 c h h'
\]
and a Weyl connection $X$ of the form
\begin{align*}
X=&c h \der x- c r (c h^2+h')\der \nu
\end{align*}
defines a hyperCR Einstein-Weyl structure on $M^3$. 
Comparing the metric of the form (\ref{tan}) to the near horizon metric (\ref{nhg}), we see that setting $c=-1$ gives the near-horizon metric
\begin{equation*}
g_{NH}=2 \der \nu \left( \der r+ h r \der x+\frac{r^2}{2}(- h'+h^2)\der \nu\right)+\der x \der x
\end{equation*}
and Weyl connection
\begin{align*}
X=&-h \der x+ r (-h^2+h')\der \nu.
\end{align*}
 
In particular, for $c=-1$, the ODE $h''=2 h h'$ that $h(x)$ satisfies
agrees with the solution (\ref{2ndh}) to (\ref{4thode}) with the parameters $\beta=0$, $c=-1$, $\al=1-c=2$.
\end{proposition}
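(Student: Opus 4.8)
The plan is to obtain Proposition \ref{hypercr1} by combining the hyperCR characterisation recalled just above with the alignment conditions already imposed on $H$. First I would take as input that the metric
\[
g = 2\der\nu\left(-2\der r + H_r\,\der x + \left(2H_x + \frac{1}{2}H_r^2\right)\der\nu\right) + \der x\,\der x
\]
together with the 1-form $X = \frac{1}{2}H_{rr}\,\der x + \frac{1}{2}(H_rH_{rr} + 2H_{xr})\,\der\nu$ defines a hyperCR Einstein-Weyl structure exactly when $H$ satisfies the hyperCR equation (\ref{hypercr}); this is the statement reviewed in the paragraph preceding the proposition. Matching $X|_{r=0}$ with the ansatz $c\,h(x)\,\der x$ and matching the metric with the near-horizon form (\ref{nhg}) forces $H = c\,h(x)\,r^2$, as explained there.

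Second, I would substitute $H = c\,h(x)\,r^2$ into (\ref{hypercr}). Using $H_x = c h' r^2$, $H_r = 2chr$, $H_{rr} = 2ch$, $H_{xr} = 2ch'r$, $H_{xx} = ch''r^2$ and $H_{r\nu} = 0$, the left-hand side of (\ref{hypercr}) collapses to $-c\,r^2\bigl(h'' + 2c\,h\,h'\bigr)$, so for $c \neq 0$ the hyperCR equation is equivalent to the second order ODE $h'' = -2c\,h\,h'$. Feeding $H = c\,h\,r^2$ back into the displayed metric and 1-form produces $g = 4\,\der\nu\bigl(-\der r + c h r\,\der x + r^2(c h' + c^2 h^2)\,\der\nu\bigr) + \der x\,\der x$ and $X = c h\,\der x + 2 c r (c h^2 + h')\,\der\nu$, and the coordinate change $r \mapsto \tilde r = -2r$ followed by renaming $\tilde r$ as $r$ brings these into exactly the metric (\ref{tan}) and the Weyl connection $X = c h\,\der x - c r(c h^2 + h')\,\der\nu$ of the statement.

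Third, I would integrate the reduced ODE. Since $h'' = -2c h h' = -c(h^2)'$, one integration gives the first integral $h' + c h^2 = \ell$ with $\ell$ constant, and separating variables yields, when $c\ell > 0$, the solution $h(x) = \frac{\sqrt{c\ell}}{c}\tanh\bigl(\sqrt{c\ell}(x+b)\bigr)$, which one checks at once against $h' = \ell - c h^2$. This proves the first assertion. For the comparison statement, setting $c = -1$ in (\ref{tan}) and in $X$ gives directly $g_{NH} = 2\der\nu\bigl(\der r + h r\,\der x + \frac{r^2}{2}(-h' + h^2)\,\der\nu\bigr) + \der x\,\der x$, i.e.\ $F(x) = -h' + h^2$, together with $X = -h\,\der x + r(-h^2 + h')\,\der\nu$, while $h'' = -2c h h'$ becomes $h'' = 2 h h'$; this is (\ref{2ndh}) with $\al = 2$, $\be = 0$, and the consistency relation (\ref{abc}) holds since $2(c-1)^2 + 3\al(c-1) + \al^2 = 8 - 12 + 4 = 0 = \be$ at $c = -1$, with $\al = 1 - c$ as claimed.

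The argument is almost entirely explicit substitution together with one elementary integration, so I do not anticipate a genuine obstacle. The one step that demands care is the coordinate rescaling in the second paragraph: the overall conformal factor, the sign of the $\der r$ term, and the induced rescaling of the $\der x$ and $\der\nu$ coefficients of both $g$ and $X$ must all be tracked so that the normalisations of (\ref{tan}) and (\ref{nhg}) agree; getting these bookkeeping factors right is what makes the two forms of the metric and 1-form match on the nose.
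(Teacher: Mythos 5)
Your proposal is correct and follows essentially the same route as the paper: substitute $H=c\,h(x)\,r^2$ into the hyperCR framework, reduce (\ref{hypercr}) to $h''=-2chh'$, integrate to get the $\tanh$ solution, and rescale $r\mapsto -2r$ to match (\ref{tan}). The only difference is that you spell out the intermediate computations that the paper leaves implicit, and all of them check out.
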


\section{Further remarks and outlook}
It is curious that solving for the Einstein-Weyl equations for a near-horizon metric (\ref{nhg}) give rise to dKP Einstein-Weyl structures for $c=-\frac{1}{2}$ as in Theorem \ref{nhg-ew-0} and hyperCR Einstein-Weyl structures for $c= -1$ as in Theorem \ref{nhgewgen} and Proposition \ref{hypercr1}. Do other parameters of $c$ give rise to other interesting EW structures? 
The metric in Gaussian null coordinates on $M^3$ with a smooth Killing horizon (see \cite{KL3}) is given by
\begin{equation}\label{dk}
g=2 \der \nu \left( \der r+r h(x,r) \der x+\frac{r}{2}F(x,r) \der \nu\right)+\gamma(x,r)^2 \der x \der x.
\end{equation}
In \cite{descendants}, the authors investigated the case where (\ref{dk}) is Einstein with non-zero cosmological constant. 
Similarly, we can investigate whether the metric of the form (\ref{dk}) admits a compatible 1-form $X_a$ such that $([g],X)$ is Einstein-Weyl.
Some computations have been made but the calculations are considerably more involved than those presented here.

\end{document}